%
%
%
%
%
\documentclass{article}
\RequirePackage[leqno]{amsmath}[2.14]
\usepackage{amssymb}
\usepackage{amsthm}
\usepackage{tabularx}
\usepackage{picture,slashbox}
\usepackage{graphicx}
\usepackage{epsfig,epsf,psfrag,epstopdf,subfigure}
\usepackage{color}
\usepackage{comment}
\usepackage{url}
\usepackage{enumitem}

\RequirePackage{xr-hyper}[6.00]
\RequirePackage{ifpdf}[2.3]
\RequirePackage{hyperref}[6.83]
\RequirePackage{xcolor}[2.11]
\colorlet{inlinkcolor}{green!50!black}
\colorlet{exlinkcolor}{red!50!black}
\hypersetup{
  colorlinks = true,
  allcolors = inlinkcolor,
  urlcolor = exlinkcolor,
}

\RequirePackage[capitalize,nameinlink]{cleveref}[0.19]

\crefname{section}{section}{sections}
\crefname{subsection}{subsection}{subsections}
\Crefname{section}{Section}{Sections}
\Crefname{subsection}{Subsection}{Subsections}

\Crefname{figure}{Figure}{Figures}

\crefformat{equation}{\textup{#2(#1)#3}}
\crefrangeformat{equation}{\textup{#3(#1)#4--#5(#2)#6}}
\crefmultiformat{equation}{\textup{#2(#1)#3}}{ and \textup{#2(#1)#3}}
{, \textup{#2(#1)#3}}{, and \textup{#2(#1)#3}}
\crefrangemultiformat{equation}{\textup{#3(#1)#4--#5(#2)#6}}%
{ and \textup{#3(#1)#4--#5(#2)#6}}{, \textup{#3(#1)#4--#5(#2)#6}}{, and \textup{#3(#1)#4--#5(#2)#6}}

\Crefformat{equation}{#2Equation~\textup{(#1)}#3}
\Crefrangeformat{equation}{Equations~\textup{#3(#1)#4--#5(#2)#6}}
\Crefmultiformat{equation}{Equations~\textup{#2(#1)#3}}{ and \textup{#2(#1)#3}}
{, \textup{#2(#1)#3}}{, and \textup{#2(#1)#3}}
\Crefrangemultiformat{equation}{Equations~\textup{#3(#1)#4--#5(#2)#6}}%
{ and \textup{#3(#1)#4--#5(#2)#6}}{, \textup{#3(#1)#4--#5(#2)#6}}{, and \textup{#3(#1)#4--#5(#2)#6}}

\crefdefaultlabelformat{#2\textup{#1}#3}

\theoremstyle{plain}

\newtheorem{lemma}{Lemma}
\newtheorem{remark}{Remark}

\newtheorem{definition}{Definition}
\newtheorem{exmp}{Example}

\numberwithin{equation}{section}
\numberwithin{table}{section}
\numberwithin{figure}{section}
\numberwithin{theorem}{section}

\newcommand{\be}{\begin{equation}}
\newcommand{\ee}{\end{equation}}
\newcommand{\ba}{\begin{aligned}}
\newcommand{\ea}{\end{aligned}}
\newcommand{\bea}{\begin{eqnarray}}
\newcommand{\eea}{\end{eqnarray}}

\newcommand\cs{\mathcal{S}}
\newcommand\cd{\mathcal{D}}

\newcommand\bd{\mathbf{D}}

\newcommand\bi{\mathbf{I}}

\newcommand\bn{\mathbf{n}}
\newcommand\bny{\mathbf{n}(\mathbf{y})}

\newcommand\bp{\mathbf{p}}
\newcommand\br{\mathbf{r}}

\newcommand\bt{\mathbf{T}}

\newcommand\lbr{\|\br\|}

\newcommand\lbrt{\|\br\|^2}

\newcommand\bphi{\boldsymbol{\phi}}

\newcommand\bu{\mathbf{u}}
\newcommand\bv{\mathbf{v}}
\newcommand\bx{\mathbf{x}}

\newcommand\by{\mathbf{y}}

\newcommand\bxy{\bx-\by}

\newcommand\brt{\frac{\br \otimes \br}{\|\br\|^2}}
\newcommand\bgg{\mathbf{G}(\bx,t;\by,\tau)}
\newcommand\bpp{\bp(\bx,t;\by,\tau)}
\newcommand\xyt{(\bx,t;\by,\tau)}

\newcommand\ndr{\bny\cdot\br}
\newcommand\nxr{\bny\otimes\br}
\newcommand\rxn{\br\otimes\bny}

\newcommand\nxn{\bn\otimes\bn}
\newcommand\nxt{\bn\otimes\bt}

\newcommand\txt{\bt\otimes\bt}
\newcommand\txn{\bt\otimes\bn}

\newcommand\ttau{t-\tau}

\newcommand\dt{\Delta t}

\title{On the accurate evaluation of unsteady Stokes layer potentials
  in moving two-dimensional geometries}
\author{Leslie Greengard\thanks{Courant Institute of Mathematical 
    Sciences, New York University, New York, New York 10012 and
    Flatiron Institute, Simons Foundation, New York, New York 10010
    ({\tt greengard@courant.nyu.edu}).}
  \and
  Shidong Jiang\thanks{Department of Mathematical Sciences,
    New Jersey Institute of Technology, Newark, New Jersey 07102
    ({\tt shidong.jiang@njit.edu}). This
    research was supported by NSF under grant DMS-1720405 and
    by the Flatiron Institute, a division of the Simons Foundation.}
  \and
  Jun Wang\thanks{Flatiron Institute, Simons Foundation, New York, NY 10010
    ({\tt jwang@flatironinstitute.org}).}
}

\begin{document}

\maketitle

\begin{abstract}
  Two fundamental difficulties are encountered
  in the numerical evaluation of time-dependent layer potentials. One is the quadratic 
  cost of history dependence, which has been successfully addressed
  by splitting the potentials into 
  two parts - a local part that contains the most recent contributions and a history
  part that contains the contributions from all earlier times. 
  The history part is smooth, easily discretized using high-order
  quadratures, and straightforward to compute using a variety of fast algorithms.
  The local part, however, involves complicated singularities in the
  underlying  Green's function.  Existing methods, based on exchanging 
  the order of integration in space and time, are able to achieve high order
  accuracy, but are limited to the case of stationary boundaries. 
  Here, we present a new quadrature method 
  that leaves the order of integration unchanged, making use of 
  a change of variables that converts the singular integrals with respect to
  time into smooth ones. We have also derived asymptotic 
  formulas for the local part that lead to 
  fast and accurate hybrid schemes, extending earlier work for scalar heat potentials
  and applicable to moving boundaries.  The performance of the overall scheme 
  is demonstrated via numerical examples.
\end{abstract}

{\bf Keywords:}
Unsteady Stokes flow, linearized Navier-Stokes equations, 
boundary integral equations, asymptotic expansion, layer potentials,
moving geometries.

\section{Introduction} \label{introduction}

In this paper, we consider an integral equation approach to  
the linearized, incompressible Navier-Stokes equations
(also called {\em unsteady Stokes flow})
in a {\it nonstationary} domain $D_T = \prod_{\tau=0}^T D(\tau)$ with smooth boundary
$\Gamma_T=\prod_{\tau=0}^T\Gamma(\tau)$:
\begin{align}
  \frac{\partial {\bf u}}{\partial t}&=\Delta {\bf u}-\nabla p + {\bf g},
  \qquad (\bx,t) \in D_T, \label{stokeseq}\\
\nabla \cdot {\bf u}&=0,   \qquad (\bx,t) \in D_T, \label{dfcond}\\
{\bf u}(\bx,0) &= {\bf u}_0(\bx), \qquad \bx \in D(0), \label{stokeseqs:ic}
\end{align}
subject to either Dirichlet (``velocity") boundary conditions
\begin{equation}
{\bf u}(\bx,t) = {\bf f}(\bx,t), \qquad (\bx,t) \in \Gamma_T 
\label{stokeseqs:dbc}
\end{equation}
or Neumann (``traction") boundary conditions
\begin{equation}
\left(\frac{\partial \bu_i(\bx,t)}{\partial \bx_k}
+\frac{\partial \bu_k(\bx,t)}{\partial \bx_i}-p(\bx,t)\delta_{ik}\right) 
\bn_k(\bx)
=\mathbf{f}(\bx,t), \qquad (\bx,t) \in \Gamma_T.
\label{stokeseqs:nbc}
\end{equation}

While unsteady Stokes flow is of interest in its own right in modeling slow
viscous flow, with applications
in microfluidics \cite{karniadakis2005,kim2005}, it also arises in solving
the fully nonlinear incompressible Navier-Stokes
equations, where
${\bf g} = - {\bf u} \cdot \nabla {\bf u}$. 
In fact, most widely used marching schemes for the nonlinear problem
treat the advective term explicitly so that ${\bf g}(\bx,t)$ can be 
considered a known function when marching in time
\cite{ascher1995sinum,brown,chorin,henshaw,liuliupego,temam}.

We are interested here in methods for the unsteady Stokes equations that
enforce the divergence-free condition exactly, without the need for a projection
step. Recently, we described a ``mixed potential" method that accomplishes
this task through a Helmholtz decomposition of the forcing term ${\bf g}$
\cite{mixedpot}.
In the present paper, we continue our investigation, begun in
\cite{jiang2012sisc}, of integral equation methods that rely on the Green's 
function for the unsteady Stokes equations - the so-called unsteady
Stokeslet. We will discuss the relative merits of the mixed potential approach and
the unsteady Stokeslet-based approach in the concluding section.
For the moment, we simply note that
initial, volume, and layer potentials based on the unsteady Stokeslet 
involve nothing more than convolution with the Green's function without any
Helmholtz decomposition.
Moreover, standard velocity or traction boundary conditions can be imposed
using the double layer or single layer potential, respectively. These lead to 
well-conditioned integral equations of Volterra type.

For {\em stationary} boundaries, high-order accurate quadrature schemes
have also developed \cite{jiang2012sisc}, following the approach developed
for layer heat potentials in \cite{greengard2000acha,li2009sisc,lin1993thesis}. 
That is, the layer potentials are split into two parts -
a local part and a history part, where the local part contains the temporal
integration on the interval $[t-\delta,t]$ and the history part contains the temporal
integration on $[0,t-\delta]$. The local part involves essential singularities
in time, treated by exchanging the order of integration in space and time,
and carrying out product integration in time analytically. The history
part requires fast algorithms, but is more or less straightforward to discretize 
since the integrals encountered are smooth in time.

When the boundary is {\em nonstationary}, the aforementioned scheme can still be used
to evaluate heat layer potentials accurately. As observed in \cite{li2009sisc}, the
heat kernel admits the following factorization:
\be\label{heatkernel}
\ba
G_H(\bx,t;\by(\tau), \tau)&=\frac{1}{4\pi(t-\tau)}
e^{-\frac{\|\bx-\by(\tau)\|^2}{4(t-\tau)}}\\
&=\frac{1}{4\pi(t-\tau)}e^{-\frac{\|\bx(t)-\by(t)\|^2}{4(t-\tau)}}\cdot
e^{-\frac{\|\by(t)-\by(\tau)\|^2}{4(t-\tau)}}\cdot
e^{-\frac{(\bx(t)-\by(t))\cdot (\by(t)-\by(\tau))}{2(t-\tau)}}.
\ea
\ee
Note that the first term on the right side of \cref{heatkernel} can be dealt with
via product integration, as in the stationary case, while the second and third
terms are both smooth so long as the boundary motion is smooth,
since the factor $\frac{\by(t)-\by(\tau)}{(t-\tau)}$ is then 
well behaved as a function of $\tau$. 
Unfortunately, this simple modification
fails for unsteady Stokes layer potentials. The
unsteady Stokeslet $\bgg$
\cite{guenther2007jmfm,jiang2012sisc} is given by the formula
\begin{equation}\label{uskernel}
\begin{aligned}
\bgg&=\frac{e^{-\lbr^2/4 (\ttau)}}{4\pi (\ttau)}\left(\bi-\brt\right)
-\frac{1-e^{-\lbr^2/4 (\ttau)}}{2\pi \lbr^2} \left(\bi-2\brt\right),
\end{aligned}
\end{equation}
where $\br=\bxy$. As a result, when the boundary is moving, 
$\br=\bx-\by(\tau)$ and the first term can be handled as above but the second
term on the right-hand side of \cref{uskernel} cannot be factorized as a Stokeslet
on a fixed domain modulated by a smooth function, due to the presence of
the factor $\lbr^2$ in the denominator.

Here, we present an accurate numerical scheme for the evaluation of
the local part of the unsteady Stokes layer potentials for both static and
moving geometries. For this, we split the local part further into two parts:
$[t-\delta,t] = [t-\delta,t-\epsilon] \cup [t-\epsilon,t]$ - the second part
is treated asymptotically and the interval $[{t-\delta},{t-\epsilon}]$ is 
treated by a change of variables in the nearly singular integrals,
as in \cite{wang2018acom}.
We carry out the asymptotic analysis only to lowest
order for both the single and double layer potentials. The double layer
derivation is somewhat technical as compared with the 
double layer heat potential \cite{greengard1990cpam,wang2018acom} 
because the kernel is not Riemann-integrable and defined only in
the principal value sense. Furthermore,
although the first asymptotic term, of the order
$\sqrt{\epsilon}$, is local in space-time, the next term of order
$O(\epsilon)$ involves an integral on the entire spatial boundary.
By contrast, asymptotic expansions for heat layer potentials involves terms
which remain local (although they involve higher and higher order 
spatial derivatives for higher and higher powers of $\epsilon$.)

An important difference between the current approach and the earlier method of
\cite{jiang2012sisc} is that the spatial integrals are now singular rather than
weakly singular and have to be interpreted
in the principal value sense. Fortunately, there are many high-order rules
available, such as the Gauss-trapezoidal rule of \cite{alpert1999sisc}.
After combining all these tools, the overall scheme is high-order accurate 
even for {\it nonstationary} boundaries and the linear systems which arise from
implicit time-marching schemes are well-conditioned and amenable to
solution using iterative schemes such as GMRES \cite{gmres}.

The paper is organized as follows. In \cref{sec:prelim}, we state
some needed integral identities and summarize the relevant properties 
of single and double layer potentials for unsteady Stokes flow. 
In \cref{sec:asym}, we derive the leading order asymptotic expansions for
layer potentials and in \cref{sec:nearpart},
we discuss the numerical treatment of the nearly-singular parts.
In \cref{sec:alg}, a fully discrete numerical scheme is described for the
Dirichlet problem. Numerical examples are presented
in \cref{sec:examples} with some concluding remarks 
in \cref{sec:conclusion}.
\section{Mathematical preliminaries}\label{sec:prelim}
We turn now to a brief summary of potential theory for unsteady Stokes flow.
We refer the reader to \cite{fabes1977ajm1,fabes1977ajm2,shen1991ajm} for
a detailed analysis of 
the properties of these {\em parabolically singular} layer potentials.
\begin{definition}
Let $\bphi$ be a vector-valued function defined on $\Gamma_T$. 
Then the single layer potential operator $\cs$ is defined by the formula
\begin{equation}\label{slp}
  \cs[\bphi](\bx,t) = \int_0^t\int_{\Gamma(\tau)} \bgg \mathbf{\bphi}(\by,\tau)
  ds(\by)d\tau,
\end{equation}
where $\bgg$ is defined in \cref{uskernel}.
The double layer potential operator $\cd$ is defined by the formula
\begin{equation}\label{dlp}
\cd[\bphi](\bx,t) = \int_0^t\int_{\Gamma(\tau)} \bd\xyt 
\mathbf{\bphi}(\by,\tau)ds(\by)d\tau
+\int_{\Gamma(t)} \frac{\rxn}{2\pi\lbr^2}
\mathbf{\bphi}(\by,t)ds(\by),
\end{equation}
where
\begin{equation}\label{dlpkernel}
\begin{aligned}
  \bd\xyt&=\frac{\nxr+(\ndr)(\bi-2\brt)}{8\pi}\frac{e^{-\lambda}}{(\ttau)^2}\\
&-\frac{\nxr+\rxn+(\ndr)(\bi-4\brt)}{8\pi}
\frac{1-e^{-\lambda}-\lambda e^{-\lambda}}{\lambda^2 (\ttau)^2},
\end{aligned}
\end{equation}
with $\lambda=\frac{\lbr^2}{4(\ttau)}$.
The kernel in the second term of \cref{dlp}
is the contribution of the instantaneous {\em pressurelet}
$\bpp$, denoted by
\begin{equation}\label{pressurelet}
\bpp = \frac{\br}{2\pi\lbr^2}\delta(\ttau),
\end{equation}
where the Dirac $\delta$ function
is understood to satisfy the condition $\int_0^t \delta(\ttau)d\tau=1$.
\end{definition}
We decompose the single layer potential $\cs[\bphi]$ 
defined in \cref{slp} into two parts - a local part and a history part:
\begin{equation}\label{slpsplit}
  \cs[\bphi]
  = \cs_L[\bphi] + \cs_H[\bphi],
\end{equation}
where the local part is 
\begin{equation}\label{slplocal}
\cs_L[\bphi](\bx,t)=\int_{t-\delta}^t \int_{\Gamma(\tau)} 
\bgg\bphi(\by,\tau)
ds(\by)d\tau, 
\end{equation}
and the history part is 
\begin{equation}\label{slphist}
S_H[\bphi](\bx,t)=\int_0^{t-\delta} \int_{\Gamma(\tau)}
\bgg\bphi(\by,\tau)ds(\by) d\tau.
\end{equation}
It is convenient to split 
the double layer potential $\cd[\bphi]$ into three parts:
a local part $\cd_L[\bphi]$, a history part $\cd_H[\bphi]$,
and a pressure part $\cd_P[\bphi]$:
\begin{equation}\label{dlpsplit}
\cd[\bphi] = \cd_L[\bphi] + \cd_H[\bphi] + \cd_P[\bphi]
\end{equation}
with
\[
\begin{aligned}
\cd_L[\bphi] &:=\int_{t-\delta}^t \int_{\Gamma(\tau)}
\bd\xyt\bphi(\by,\tau)ds(\by)d\tau,\\
\cd_H[\bphi] &:= \int_0^{t-\delta} \int_{\Gamma(\tau)}
\bd\xyt\bphi(\by,\tau)ds(\by) d\tau,\\
\cd_P[\bphi] &:= \int_{\Gamma(t)} \frac{\rxn}{2\pi\lbr^2}
\mathbf{\bphi}(\by,t)ds(\by),
\end{aligned}
\]
where the first and third terms on the right side of \cref{dlpsplit}
are understood in the principal value sense. For both layer potentials,
the parameter $\delta$ will be chosen to be a constant multiple of whatever
time step
$\dt$ is being used in a time-marching scheme. 
The density $\bphi$,
will be represented by a piecewise polynomial approximation with respect to 
the time variable.  The degree of that approximation
determines the time order of accuracy of the numerical scheme 
\cite{jiang2012sisc}.

We will make use of the following integral identities:
\begin{align}
 &\int_{-\infty}^{\infty}e^{-z^2}dz =\sqrt{\pi},\quad 
 \int_{-\infty}^{\infty}z^2e^{-z^2}dz&=\frac{\sqrt{\pi}}{2}, \label{ii1}\\
 &\int_{-\infty}^{\infty}\frac{1-e^{-z^2}}{z^2}dz&=2\sqrt{\pi}, \label{ii3}\\
 &\int_{-\infty}^{\infty}\frac{1-e^{-z^2}-z^2e^{-z^2}}{z^4}dz&=\frac{2\sqrt{\pi}}{3}.
  \label{ii4}
\end{align}

The formulas \cref{ii1} are well-known. To prove \cref{ii3},
let $f(x)$ be defined by the formula
\be
f(x) =   \int_{-\infty}^{\infty}\frac{1-e^{-xz^2}}{z^2}dz.
\ee
It is easy to show that $f(x)$ is well defined for $x\in(0,+\infty)$ since the
integrand is bounded as $z\rightarrow 0$ and integrable as
$z\rightarrow \pm\infty$. Moreover, calculation shows that
$\lim_{x\rightarrow 0^+} f(x) = 0$ and
$f'(x) = \int_{-\infty}^{\infty}e^{-xz^2}dz= \frac{\sqrt{\pi}}{\sqrt{x}}$ for $x>0$.
Integrating $f'(x)$ from $0$ to $1$, we obtain \cref{ii3}.
Similarly, let $g(x)$ be defined by
\be
g(x)=\int_{-\infty}^{\infty}\frac{1-e^{-xz^2}-xz^2e^{-xz^2}}{z^4}dz.
\ee
Then $\lim_{x\rightarrow 0^+} g(x) = 0$ and
$g'(x) = \int_{-\infty}^{\infty}xe^{-xz^2}dz= \sqrt{\pi x}$ for $x>0$.
Integrating $g'(x)$ from $0$ to $1$, we obtain \cref{ii4}.

\begin{remark}
To solve the equations \cref{stokeseq} with velocity boundary 
conditions, we seek a representation of the solution of
the form 
\[ 
{\bf u}(\bx,t) = 
\cd[\bphi](\bx,t) + {\cal V}[{\bf g}](\bx,t),
\]
where
\[
{\cal V}[{\bf g}](\bx,t) = \int_0^t\int_{\Omega(\tau)} 
\bgg \bf{g}(\by,\tau) \, d\by \, d\tau.
\]
This satisfies the partial differential equation and divergence
condition bu construction \cite{jiang2012sisc}.
Imposition of the boundary condition \cref{stokeseqs:dbc} leads to the
boundary integral equation 
\begin{equation} 
\label{biedir}
-\frac12 \bphi(\bx,t) +
\cd^\ast[\bphi](\bx,t) = {\bf f}(\bx,t) - {\cal V}[{\bf g}](\bx,t)
\end{equation}
where $\cd^\ast[\bphi](\bx,t)$ denotes the double layer potential
defined in the principal value sense.
We are primarily interested here in the solution of this equation and the design
of suitable quadrature methods and will assume that the 
the volume source term and corresponding potential 
${\cal V}[{\bf g}](\bx,t)$ are absent for the sake of simplicity.
\end{remark}
\section{Asymptotic analysis of the local layer potentials}
\label{sec:asym}
While it is possible to treat the local parts of the single and
double layer potentials by quadrature techniques alone,
it will turn out to be more efficient to 
split them further in the form: 
\begin{equation}\label{slocalsplit}
  \cs_L[\bphi] = \cs_\epsilon[\bphi] + \cs_N[\bphi]
\end{equation}
and
\begin{equation}\label{dlocalsplit}
\cd_L[\bphi] = \cd_\epsilon[\bphi] + \cd_N[\bphi]
\end{equation}
where
\[
\begin{aligned}
\cs_\epsilon[\bphi] &:=\int_{t-\epsilon}^t \int_{\Gamma(\tau)}
\bgg\bphi(\by,\tau)ds(\by)d\tau,\\
\cs_N[\bphi] &:= \int_{t-\delta}^{t-\epsilon} \int_{\Gamma(\tau)}
\bgg\bphi(\by,\tau)ds(\by) d\tau, \\
\cd_\epsilon[\bphi] &:=\int_{t-\epsilon}^t \int_{\Gamma(\tau)}
\bd\xyt\bphi(\by,\tau)ds(\by)d\tau\\
\cd_N[\bphi] &:= \int_{t-\delta}^{t-\epsilon} \int_{\Gamma(\tau)}
\bd\xyt\bphi(\by,\tau)ds(\by) d\tau.
\end{aligned}
\]
The terms $\cs_\epsilon[\bphi]$ and $\cd_\epsilon[\bphi]$ will be treated
by asymptotic methods, with $\epsilon$ chosen to be sufficiently small to
satisfy a given error tolerance.

To carry out the analysis, let
the reference ``target" point be denoted by $\bx\in \Gamma(t)$.
The unit tangent vector, unit normal vector and signed curvature
at $\bx$ are denoted by $\bt$, $\bn$, and $\kappa$, respectively.
The velocity at $(\bx, t)$ is denoted by $\bv$. Assuming the
curve is parametrized in arclength $s$, starting from
$\bx$, the ``source" point $\by(s,\tau)\in \Gamma(\tau)$
has the following Taylor expansion in $s$ and $\ttau$:
\be
\by(s,\tau) = \bx + \bt s -\frac{1}{2}\bn \, \kappa s^2 - (\bv\cdot\bn)\bn (\ttau)
+\cdots.
\ee
\begin{lemma}
The leading order asymptotic expansion of the single layer potential is given by
\be\label{slpasym2}
\cs_\epsilon[\bphi](\bx,t) 
=\sqrt{\frac{\epsilon}{\pi}}\txt \bphi(\bx,t)+ O(\epsilon).
\ee
\end{lemma}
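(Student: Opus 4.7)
The strategy is to exploit localization. Since the Gaussian factor in $\bgg$ concentrates the source $\by$ within an $O(\sqrt\epsilon)$ neighborhood of the target $\bx$ whenever $\tau\in[t-\epsilon,t]$, the boundary can be parametrized by arclength $s$ about $\bx$, and I would substitute the Taylor expansion of $\by(s,\tau)$ given just before the lemma. Writing $\sigma = \ttau$, the displacement $\br=\bx-\by$ takes the form $\br = -\bt s + \tfrac12\bn\kappa s^2 + (\bv\cdot\bn)\bn\sigma + \cdots$, so to leading order $\lbrt = s^2 + \cdots$ and $\brt = \txt + \cdots$. Hence the tensors appearing in \cref{uskernel} simplify to
\begin{equation*}
\bi - \brt = \nxn + \cdots, \qquad \bi - 2\brt = \nxn - \txt + \cdots,
\end{equation*}
while $\bphi(\by,t-\sigma)$ may be replaced by $\bphi(\bx,t)$.

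Next, I would rescale the arclength by $z = s/(2\sqrt\sigma)$ and extend the spatial integration to the entire real line, which introduces only an exponentially small error in $\sigma$. The first term of \cref{uskernel} then contributes
\begin{equation*}
\nxn\,\bphi(\bx,t)\int_0^\epsilon\!\!\int_{-\infty}^\infty\!\frac{e^{-s^2/4\sigma}}{4\pi\sigma}\,ds\,d\sigma = \sqrt{\epsilon/\pi}\,\nxn\,\bphi(\bx,t)
\end{equation*}
by a direct Gaussian computation using \cref{ii1}, while the second gives
\begin{equation*}
-(\nxn-\txt)\,\bphi(\bx,t)\int_0^\epsilon\!\!\int_{-\infty}^\infty\!\frac{1-e^{-s^2/4\sigma}}{2\pi s^2}\,ds\,d\sigma = -\sqrt{\epsilon/\pi}\,(\nxn-\txt)\,\bphi(\bx,t)
\end{equation*}
by the same rescaling combined with \cref{ii3}. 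Adding the two contributions, the $\nxn$ pieces cancel exactly and produce $\sqrt{\epsilon/\pi}\,\txt\,\bphi(\bx,t)$, which is the claimed leading term.

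The main technical obstacle will be showing that every discarded contribution is genuinely $O(\epsilon)$ rather than $O(\sqrt\epsilon)$. Because the $\txt$ coefficient emerges only through a delicate cancellation of two rank-one pieces each of individual size $\sqrt\epsilon$, one must track the corrections arising from (i) the curvature term $\tfrac12\bn\kappa s^2$ in the expansion of $\by$; (ii) the normal velocity $(\bv\cdot\bn)\bn\sigma$ produced by boundary motion; (iii) the Taylor remainder of $\bphi(\by,\tau)-\bphi(\bx,t)$ in both $s$ and $\sigma$; and (iv) the truncation of the arclength integral to the full real line. Under the scaling $z = s/(2\sqrt\sigma)$, each such correction carries at least one additional factor of $\sqrt\sigma$, so the remaining $\sigma$-integral on $[0,\epsilon]$ furnishes the extra $\sqrt\epsilon$ needed to reach $O(\epsilon)$.
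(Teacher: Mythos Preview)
Your proposal is correct and follows essentially the same route as the paper's proof: localize near $\bx$, Taylor-expand $\br$ so that $\brt\approx\txt$, rescale by $z=s/(2\sqrt\sigma)$, and invoke \cref{ii1} and \cref{ii3} to produce the $\txt$ coefficient through cancellation of the two rank-one pieces. The only differences are cosmetic---the paper keeps the tensors as $\bi-\txt$ and $\bi-2\txt$ rather than your equivalent $\nxn$ and $\nxn-\txt$, and also substitutes $u=2\sqrt\sigma$---together with one minor imprecision on your side: the tail of the second (non-Gaussian) kernel piece decays only like $1/s^2$, so extending to the real line contributes $O(\epsilon)$ rather than something exponentially small, which is still exactly what you need.
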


\begin{proof}
We first split the spatial integral in $\cs_\epsilon[\bphi]$ into two parts:
\be
\int_{\Gamma(\tau)} = \int_{\Gamma(\tau)\cap B_a^c(\bx)}
+ \int_{\Gamma(\tau)\cap B_a(\bx)},
\ee
where $B_a(\bx)$ is a ball of radius $a$ centered at $\bx$ and $B_a^c(\bx)$
is its complement in $\mathbb{R}^2$. Here, $a$ is a fixed small positive number.
Clearly, $\lbr$ is bounded away
from zero on $\Gamma(\tau)\cap B_a^c(\bx)$. Thus, the term
$\frac{e^{-\lbr^2/4 (\ttau)}}{4\pi (\ttau)} \rightarrow 0$ exponentially fast
as $\epsilon\rightarrow 0$ for $\tau \in (t-\epsilon, t)$, and the term
$\frac{1-e^{-\lbr^2/4 (\ttau)}}{2\pi \lbr^2}$ approaches
$\frac{1}{2\pi \lbr^2}$. Combining these two facts, we conclude that
\be
\int_{t-\epsilon}^t\int_{\Gamma(\tau)\cap B_a^c(\bx)} \sim O(\epsilon),
\ee
and hence,
\be\label{slpasym}
\cs_\epsilon[\bphi](\bx,t) = \int_{t-\epsilon}^t \int_{\Gamma(\tau)\cap B_a(\bx)}
\bgg\bphi(\by,\tau)ds(\by)d\tau + O(\epsilon).
\ee
In the following asymptotic estimates, we assume $s^2=O(\ttau)$.
\be\label{asym1}
\ba
\br &= \bx - \by = -\bt s +\frac{1}{2}\bn\kappa s^2
+(\bv\cdot\bn)\bn (\ttau)+\cdots,\\
\lbrt &= s^2+O(s^3), \qquad \brt = \txt + O(s), \qquad ds(\by)=(1+O(s^2))ds.\\ 
\ea
\ee
Substituting \cref{asym1} into \cref{slpasym}, we obtain
\be
\ba
&\cs_\epsilon[\bphi](\bx,t) = \int_{t-\epsilon}^t \int_{-a}^a
\left(\frac{e^{-s^2/4 (\ttau)+O(s)}}{4\pi (\ttau)}\left(\bi-\txt+O(s)\right)\right.\\
&\left.-\frac{1-e^{-s^2/4 (\ttau)+O(s)}}{2\pi s^2} \left(\bi-2\txt+O(s)\right)\right)
\bphi(\by,\tau)(1+O(s^2))dsd\tau + O(\epsilon).
\ea
\ee
The change of variables $z=\frac{s}{\sqrt{4(\ttau)}}$ and $u=\sqrt{4(\ttau)}$
gives $s=zu$, $\tau=t-u^2/4$,
$dsd\tau = -2(\ttau)dzdu$. Thus,
\be
\ba
&\cs_\epsilon[\bphi](\bx,t) = 2\int_{0}^{2\sqrt{\epsilon}} \int_{-\infty}^\infty
\left(\frac{e^{-z^2}}{4\pi}\left(\bi-\txt\right)\right.\\
&\left.-\frac{1-e^{-z^2}}{8\pi z^2} \left(\bi-2\txt\right)\right)
\bphi(\by(zu,t-u^2/4),t-u^2/4)dzdu + O(\epsilon).
\ea
\ee
Note that $\bx=\by(0,t)$.
Substituting \cref{ii1,ii3} into the above expression, we obtain
\be\label{slpasym2a}
\cs_\epsilon[\bphi](\bx,t) = 4\sqrt{\epsilon}
\left(
\frac{\sqrt{\pi}}{4\pi}\left(\bi-\txt\right)
-\frac{\sqrt{\pi}}{4\pi} \left(\bi-2\txt\right)\right)\bphi(\bx,t)
+ O(\epsilon), 
\ee
from which the result follows.
\end{proof}
\begin{lemma}
The leading order asymptotic expansion of the double layer potential is given by
\be\label{dlpasym}
\ba
\cd_\epsilon[\bphi](\bx,t) &= \int_{t-\epsilon}^t \int_{\Gamma(\tau)}
\bd\xyt\bphi(\by,\tau)ds(\by)d\tau\\
&=\sqrt{\frac{\epsilon}{\pi}}\left\{
\left(\frac{1}{6}\bv\cdot\bn+\frac{1}{2}\kappa\right)\bi
+\frac{1}{3}(\bv\cdot\bn) \txt  \right. \\
&\left. -\left(\frac{\bv\cdot\bn}{6} + \frac{3\kappa}{2} \right)\nxn\right\}
\bphi(\bx,t) \\
&+\sqrt{\frac{\epsilon}{\pi}}\left\{\nxt+2\txn\right\}\bphi_s(\bx,t)
+O(\epsilon).
\ea
\ee
\end{lemma}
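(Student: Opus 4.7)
The plan is to adapt the asymptotic strategy used for the single layer potential, but to retain more terms in every Taylor expansion, since \cref{dlpkernel} is more singular and the answer contains contributions from the curvature, the normal velocity, and the first tangential derivative of the density. First, I would split the spatial integral into a near part on $\Gamma(\tau)\cap B_a(\bx)$ and a far part on $\Gamma(\tau)\cap B_a^c(\bx)$ for a fixed small $a>0$. On the far part, the factor $e^{-\lambda}/(\ttau)^2$ decays super-algebraically across the time slab of width $\epsilon$, while $(1-e^{-\lambda}-\lambda e^{-\lambda})/(\lambda^2(\ttau)^2)$ reduces to a bounded function of order $1/\lbr^4$. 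Combined with the boundedness of the tensor prefactors for $\lbr \geq a$, this yields an $O(\epsilon)$ contribution that is absorbed into the error.

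On the near part, I would Taylor expand every geometric quantity in $s$ and $\ttau$ using the given formula for $\by(s,\tau)$ together with the 2D Frenet--Serret relations for $\bny$:
\begin{align*}
\br &= -\bt s + O(s^2,\ttau), \quad \lbrt = s^2 + O(s^3,(\ttau)^2), \quad \ndr = O(s^2)+O(\ttau), \\
\nxr &= -s\,\nxt + O(s^2,\ttau), \quad \rxn = -s\,\txn + O(s^2,\ttau), \quad \brt = \txt + O(s,\ttau/s),
\end{align*}
and expand the density as $\bphi(\by,\tau) = \bphi(\bx,t) + s\,\bphi_s(\bx,t) + O(s^2,\ttau)$. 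Each factor must be kept to just enough orders that, after substitution into \cref{dlpkernel}, every product contributing at order $\sqrt{\epsilon}$ is captured. The $\ndr$ expansion is what generates the separate $\kappa$ and $\bv\cdot\bn$ contributions, because $\ndr/(\ttau)^2$ under the rescaling below produces both a $z^2$ piece (coupled to $\kappa$) and a constant piece (coupled to $\bv\cdot\bn$).

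Next, as in the single layer proof, I would apply the change of variables $z = s/\sqrt{4(\ttau)}$ and $u = 2\sqrt{\ttau}$, under which $\lambda = z^2$, the $1/(\ttau)^2$ together with the Jacobian cancels the temporal singularity, and the $u$-integral produces the overall factor $2\sqrt{\epsilon}$. Extending $z$ to $(-\infty,\infty)$ costs only an exponentially small correction. The surviving terms split into two groups: (a) even-in-$z$ pieces of the kernel paired with $\bphi(\bx,t)$, which yield the coefficients of $\bi$, $\txt$, and $\nxn$; and (b) odd-in-$z$ pieces of the kernel (coming from the $-s\,\nxt$ and $-s\,\txn$ leading terms in $\nxr$ and $\rxn$) paired with the odd factor $s\,\bphi_s(\bx,t)$, which become even after multiplication and produce the $\nxt$ and $\txn$ coefficients. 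The remaining odd-in-$z$ leftovers vanish, which is precisely how the principal value interpretation of \cref{dlp} enters the argument. The resulting one-dimensional $z$-integrals are Gaussian moments evaluated by \cref{ii1,ii3,ii4}.

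The main obstacle will be careful bookkeeping of cancellations. Several nominally divergent pieces in the expansion of $\brt$ (the $O(\ttau/s)$ remainders) couple to the $\ndr/(\ttau)^2$ factor and must be shown to recombine into absolutely convergent $z$-integrals. This is where the specific algebraic structures $\bi - 2\brt$ in the first piece and $\bi - 4\brt$ in the second piece of \cref{dlpkernel} play their role: the $\txt$-leading parts drop out in exactly the proportions needed for the finite limit to exist. Once each surviving contribution is tracked through the rescaling, the Gaussian moments are computed, and the five tensorial pieces are collected, the numerical coefficients $\tfrac{1}{6}\bv\cdot\bn+\tfrac{1}{2}\kappa$, $\tfrac{1}{3}(\bv\cdot\bn)$, $\tfrac{1}{6}\bv\cdot\bn+\tfrac{3}{2}\kappa$, $1$, and $2$ in \cref{dlpasym} emerge.
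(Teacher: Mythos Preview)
Your proposal is correct and follows essentially the same route as the paper: split off the far part as $O(\epsilon)$, Taylor expand all geometric quantities and the density in $s$ and $\ttau$ (exactly the expansions in \cref{asyms}), apply the change of variables $z=s/\sqrt{4(\ttau)}$, $u=2\sqrt{\ttau}$, and reduce to the Gaussian moments \cref{ii1,ii3,ii4} (equivalently \cref{ii5,ii6,ii7,ii8}). Your explicit discussion of the even/odd-in-$z$ pairing and of the $O((\ttau)/s)$ remainders in $\brt$ is a helpful elaboration of what the paper compresses into ``combining \ldots\ after simplifying the resulting expression.''
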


\begin{proof}
Analysis of the double layer is more involved because of the fact that it is
defined only in the principal value sense.
We proceed by first expanding various needed quantities in terms of the arclength
parameter $s$:
\begin{align}
\br = \bx - \by = -\bt s +\frac{1}{2}\bn\kappa s^2
+ (\bv\cdot\bn)\bn (\ttau)
+O(s^3), \nonumber \\
\bn(\by) = \bn + \bt\kappa s + O(s^2),\nonumber \\
\label{asyms}
\bn(\by)\cdot \br =  -\frac{1}{2}\kappa s^2+(\bv\cdot \bn) (\ttau) +O(s^3), \\
\nxr = -\nxt s+ \left(\frac{1}{2}\nxn-\txt\right)\kappa s^2
+(\bv\cdot\bn)\nxn (\ttau)+O(s^3), \nonumber \\
\rxn = -\txn s+\left(\frac{1}{2}\nxn-\txt\right)\kappa s^2
+(\bv\cdot\bn)\nxn (\ttau)+O(s^3), \nonumber \\
\bphi(\by,\tau) = \bphi(\bx,t) + \bphi_s(\bx,t) s + O(s^2). \nonumber
\end{align}
In the last expression, $\bphi_s$ is the derivative of $\bphi$ with respect to 
arclength.
Using the same change of variables as for the single layer, namely 
$z=\frac{s}{\sqrt{4(\ttau)}}$ and $u=\sqrt{4(\ttau)}$, we have
\begin{align}
&\int_{t-\epsilon}^t \int_{-\infty}^\infty s^2\frac{e^{-s^2/4(\ttau)}}{(\ttau)^2}dsd\tau
  =8\int_0^{2\sqrt{\epsilon}}\int_{-\infty}^\infty z^2e^{-z^2}dzdu=8\sqrt{\pi\epsilon},
  \label{ii5}\\
&\int_{t-\epsilon}^t \int_{-\infty}^\infty (\ttau)\frac{e^{-s^2/4(\ttau)}}{(\ttau)^2}dsd\tau
  =2\int_0^{2\sqrt{\epsilon}}\int_{-\infty}^\infty e^{-z^2}dzdu=4\sqrt{\pi\epsilon},
  \label{ii6}\\
&\int_{t-\epsilon}^t \int_{-\infty}^\infty s^2\frac{1-e^{-s^2/4(\ttau)}-s^2/(4(\ttau))
  e^{-s^2/4(\ttau)}}{(s^2/(4(\ttau)))^2(\ttau)^2}dsd\tau
=16\sqrt{\pi\epsilon}, \label{ii7}\\
&\int_{t-\epsilon}^t \int_{-\infty}^\infty (\ttau)\frac{1-e^{-s^2/4(\ttau)}-s^2/(4(\ttau))e^{-s^2/4(\ttau)}}{(s^2/(4(\ttau)))^2(\ttau)^2}dsd\tau
=\frac{8}{3}\sqrt{\pi\epsilon}. \label{ii8}
\end{align}

The desired result follows from
combining \cref{dlpkernel,dlocalsplit,asym1,asyms,ii5,ii6,ii7,ii8} 
after simplifying the resulting expression.
\end{proof}
\section{Quadrature methods for the nearly singular parts}\label{sec:nearpart}

We now consider the evaluation of the nearly singular contributions
to the single and double layer potentials, 
$\cs_N[\bphi]$ and $\cd_N[\bphi]$.
Inspection of the kernels shows that we need to consider
the following terms which involve singularities in either space or time:
\be\label{nsterms}
\brt, \qquad \frac{e^{-\lambda}}{\ttau}, \qquad \frac{e^{-\lambda}}{(\ttau)^2},
\qquad \frac{1-e^{-\lambda}}{\lambda (\ttau)},
\qquad \frac{1-e^{-\lambda}-\lambda e^{-\lambda}}{\lambda^2 (\ttau)^2},
\ee
where $\lambda=\frac{\lbr^2}{4(\ttau)}$. 

Each entry in the tensor
product $\brt$ is bounded by $1$ but does not have a definite
limiting value as $\|\br\|\rightarrow 0$. In \cite{jiang2012sisc},
it was shown that by carrying out product integration in time first, 
the resulting spatial convolution kernels have logarithmic singularities
for which there are effective quadrature rules.
The full double layer kernel (including the pressurelet) involves non-integrable
singularities, so it is
critical to use quadrature rules that integrate functions
in the principal value sense as well. 
Alpert's  Gauss-trapezoidal rule for logarithmic singularities
\cite{alpert1999sisc} accomplishes both tasks with very high order accuracy
for discretizations based on equispaced points with respect to an underlying
parametrization of the curve $\Gamma(t)$.
For adaptive methods, based on representing the boundary
as the concatenation of boundary segments, 
a variety of other high-order rules are available
\cite{ggq1,kolm,ggq2,ggq3,helsing2,helsing3,kress}. In all cases, the spatial
quadrature rules avoid kernel evaluation at the singular point $\br={\bf 0}$ itself. 
Thus, we will assume that $\br \ne {\bf 0}$ in the subsequent discussion.

The remaining four terms in \cref{nsterms} involve singularities in time. 
We need to integrate these terms when multiplied by a smooth 
function of $\tau$
on the interval $[t-\delta,t-\epsilon]$. Assuming for simplicity that the
smooth function is constant, we follow the approach introduced for heat potentials
in \cite{wang2018acom} and apply
the change of variables $\ttau=e^z$. 
Assuming the smooth function is constant as a function of $\tau$,
we have
\be\label{nsintegrals}
\ba
\int_{t-\delta}^{t-\epsilon}\frac{e^{-\lambda}}{\ttau}d\tau &=
\int_{\ln \epsilon}^{\ln \delta} e^{-\frac{\lbr^2}{4}e^{-z}}dz,\\
\int_{t-\delta}^{t-\epsilon}\frac{e^{-\lambda}}{(\ttau)^2}d\tau &=
\int_{\ln \epsilon}^{\ln \delta} e^{-\frac{\lbr^2}{4}e^{-z}}e^{-z}dz,\\
\int_{t-\delta}^{t-\epsilon}\frac{1-e^{-\lambda}}{\lambda (\ttau)}d\tau &=
\int_{\ln \epsilon}^{\ln \delta}\frac{4}{\lbr^2}\left(1- e^{-\frac{\lbr^2}{4}e^{-z}}\right)e^{z}dz,\\
\int_{t-\delta}^{t-\epsilon}\frac{1-e^{-\lambda}-\lambda e^{-\lambda}}{\lambda^2 (\ttau)^2}d\tau &=
\int_{\ln \epsilon}^{\ln \delta}\frac{16}{\lbr^4}\left(1- e^{-\frac{\lbr^2}{4}e^{-z}}
-\frac{\lbr^2}{4}e^{-z}e^{-\frac{\lbr^2}{4}e^{-z}}\right)e^{z}dz.
\ea
\ee
Note that all of the integrals in
\cref{nsintegrals} are smooth in the new variable $z$
(even for $\br={\bf 0}$). 
Following \cite{wang2018acom}, in which only the first two integrals
above arise, we use Gauss-Legendre
quadrature on the interval $[\ln \epsilon,\ln \delta]$ 
to compute these integrals and the corresponding temporal integration
in both $\cs_N[\bphi]$ and $\cd_N[\bphi]$. A detailed analysis
of the discretization error is nontrivial 
even for the case of the scalar heat kernel.
It is shown in \cite{wang2018acom}, however, that the error in $n$-point
Gauss-Legendre quadrature for the single layer potential is of the order 
$O(\Delta t^k) + O\left( \log \left(\frac{\Delta t}{\epsilon} \right) \, f(n) \right)$,
where $f(n)$ is an exponentially decreasing function of $n$.
The first term accounts for the use of a k$th$ order accurate
approximation of the density in time.
The second term is more subtle. The order of accuracy is low with
respect to the time step but compensated for by permitting controllable
precision by increasing $n$.
Our numerical experiments are consistent with the estimate above, but
in practice, local error estimation based on the desired precision will
more efficiently determine the number of nodes required than {\em a priori} analysis.
Numerical experiments show that the
number of quadrature points needed is about $10 \log_{10} (1/\epsilon)$ 
to achieve a precision of $\epsilon$ for
$\lbr\in [0,1]$, assuming that $\lbr$ is a smoothly varying function of $\tau$.
It is likely that we could reduce the number of nodes needed by a more specialized
generalized Gaussian rule \cite{ggq1,ggq2,ggq3}.
\section{Numerical Implementation}\label{sec:alg}

We illustrate the use of our hybrid scheme
in solving the problem of unsteady Stokes flow with velocity boundary conditions. 
The procedure follows that in
\cite{jiang2012sisc}, and we refer the reader to that paper for a detailed discussion.
In short, for the history part $\cd_H[\bphi]$, we make use of a Fourier spectral 
approximation of the unsteady Stokeslet. This permits the use of the nonuniform FFT
and recurrence relations, which reduces the cost of evaluation to 
$O(NM \log M)$, where $N$ is the number of time steps and $M$ is the number
of points in the discretization of the boundary.
Because the kernel separates in both space and time in the Fourier basis,
moving boundaries pose no difficulty.
The local part $\cd_L[\bphi]$
is handled by the techniques outlined above in \cref{sec:asym,sec:nearpart}.
Because of the error in the asymptotic piece, it is convenient to set 
the cutoff parameter $\epsilon$ to the user-specified tolerance
$\varepsilon$. The near singular error is then controlled by the number of 
nodes in the near-singular part, which is if the order $O(\log (1/\varepsilon)$.
It is also possible to forego the use of asymptotics entirely and use the 
near singular quadrature on $[t-\delta, t = \varepsilon^2]$ with an error of the 
order $O(\varepsilon)$ from the truncation in time.
This increases the number of Gauss-Legendre nodes needed,
but could have advantages in terms of robustness and is useful
for numerical validation of the asymptotic estimate and for step-size control.
Finally, it was shown in \cite{jiang2012sisc} that
any implicit multistep semi-discretization scheme results in a system of
{\it second kind} integral equations at each time step,
even though the time-dependent Volterra integral equations
themselves are not of the second kind~\cite{fabes1977ajm1,shen1991ajm}. Thus,
iterative solution using  GMRES requires only a modest number of iterations
to solve the resulting linear system.
\section{Numerical Results}\label{sec:examples}
\begin{figure}[t]
\centering
\includegraphics[height=40mm]{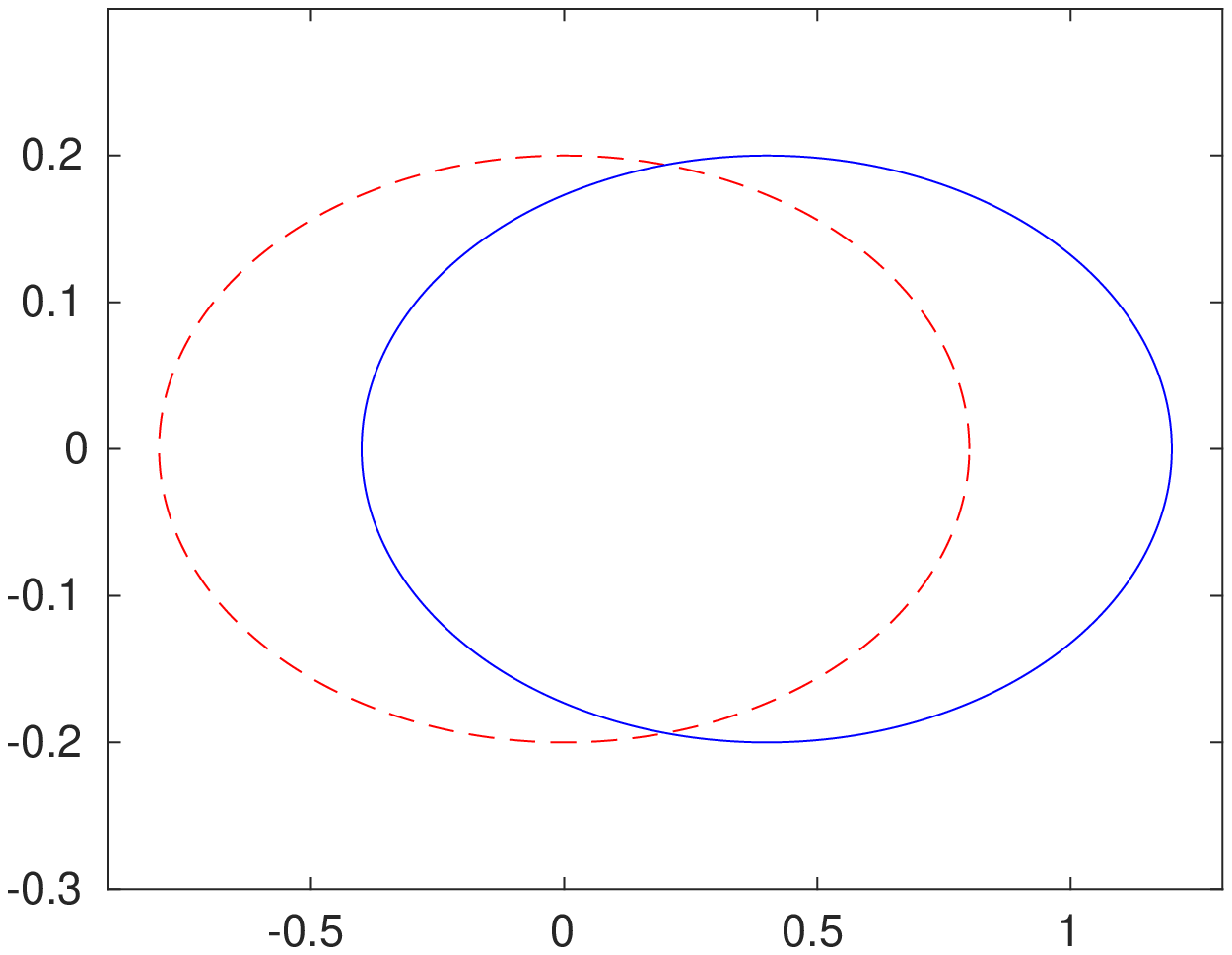}
\hspace{5mm}
\includegraphics[height=40mm]{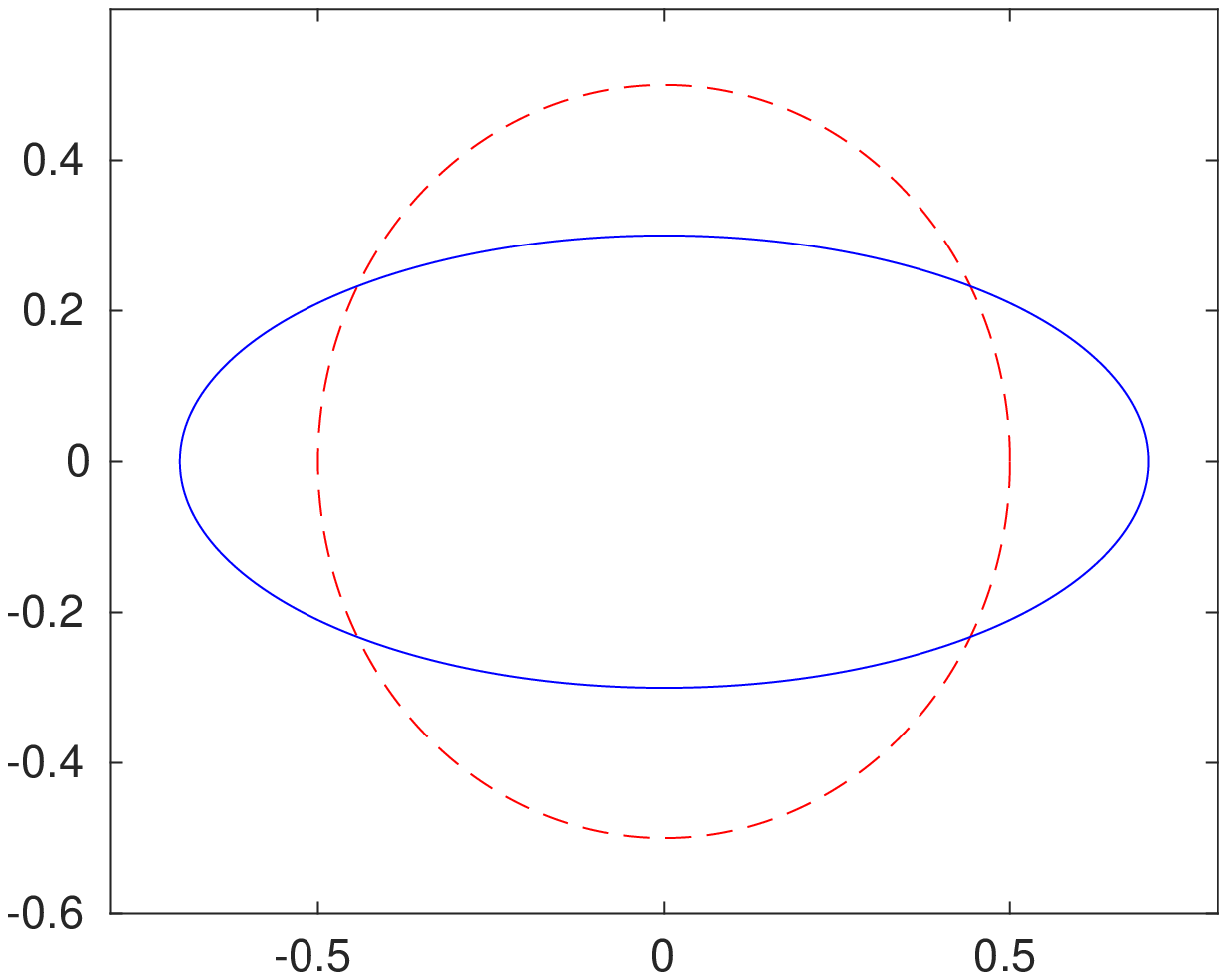}
\caption{Two moving boundaries. Left: An ellipse moving with constant speed.
  Right: a circle morphing into an ellipse. Red dashed line: initial position;
  blue solid line: final position.}
\label{fig1}
\end{figure}

We illustrate the performance of our method in two
{\it moving} geometries (\cref{fig1}):
\begin{enumerate}[label=(\alph*)]
\item an ellipse moving with constant speed
  \be\label{mb1}
  \left\{
  \ba
  y_1(\theta, t)&= 0.8\cos(\theta) + 0.4 t,\\
  y_2(\theta, t)&=0.2\sin(\theta),
  \ea
  \right. \qquad \theta\in [0, 2\pi]
  \ee
\item a circle deforming to an ellipse
  \be\label{mb2}
  \left\{
  \ba
  y_1(\theta, t)&= (0.5+0.2t)\cos(\theta),\\
  y_2(\theta, t)&= (0.5-0.2t)\sin(\theta),
  \ea
  \right. \qquad \theta\in [0, 2\pi],\ 0 \leq t \leq 1.
  \ee
\end{enumerate}
\begin{exmp} \label{exmp1}
\textsf{Validation of the asymptotic expansion \cref{slpasym2} of the unsteady Stokes 
single layer potential.}
\end{exmp}

To confirm the validity (and correctness) 
of the asymptotics for the
single layer potential, we calculate the single layer
potential on two moving boundaries for $t\in [0, T]$ with
$T=0.075$ and density function
\be
\bphi(\by,t)=\left(\cos(20y_2(\theta,T), 3y_1^3(\theta,T)\right).
\ee

A 12-digit accurate reference solution is computed using our near-singular 
quadrature rules on the interval $[0,T-\epsilon_M]$ with 16$th$ order accurate
spatial integration rules on a mesh with $200$ points. We then use
the hybrid asymptotic/numerical method and compute the near-singular part
on $[0,T-\epsilon]$ to 12-digit accuracy for various values of $\epsilon$.
After adding the asymptotic contribution, this should match the reference solution
with an error dominated by the asymptotics.
\Cref{fig2} shows the relative $l^2$ error as a function of
$\epsilon$ for the two moving boundaries in \cref{fig1}, which is 
clearly consistent with our analysis showing that it  
should be proportional to $\epsilon$. 
\begin{figure}[ht]
\centering
\includegraphics[height=40mm]{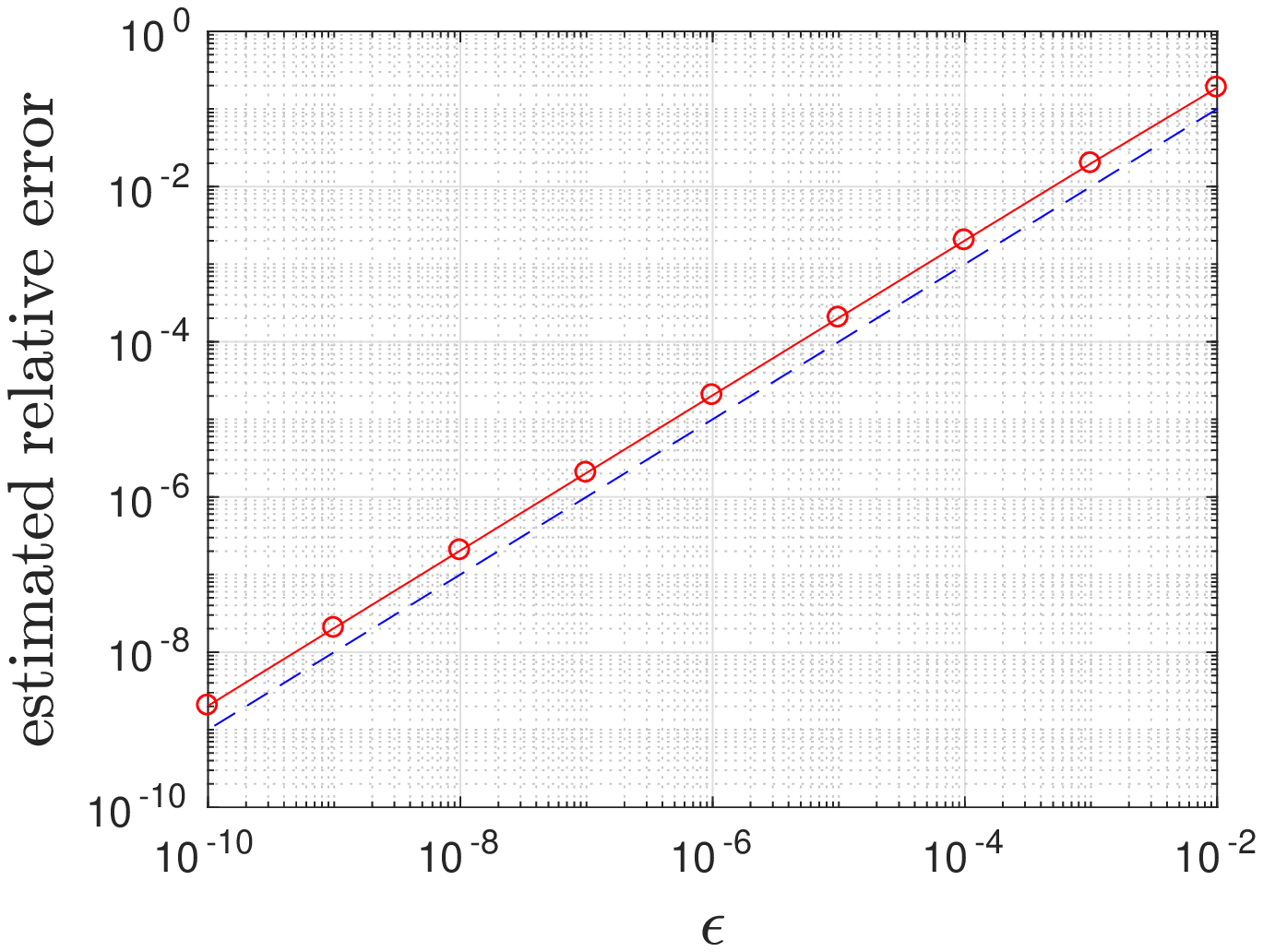}
\hspace{5mm}
\includegraphics[height=40mm]{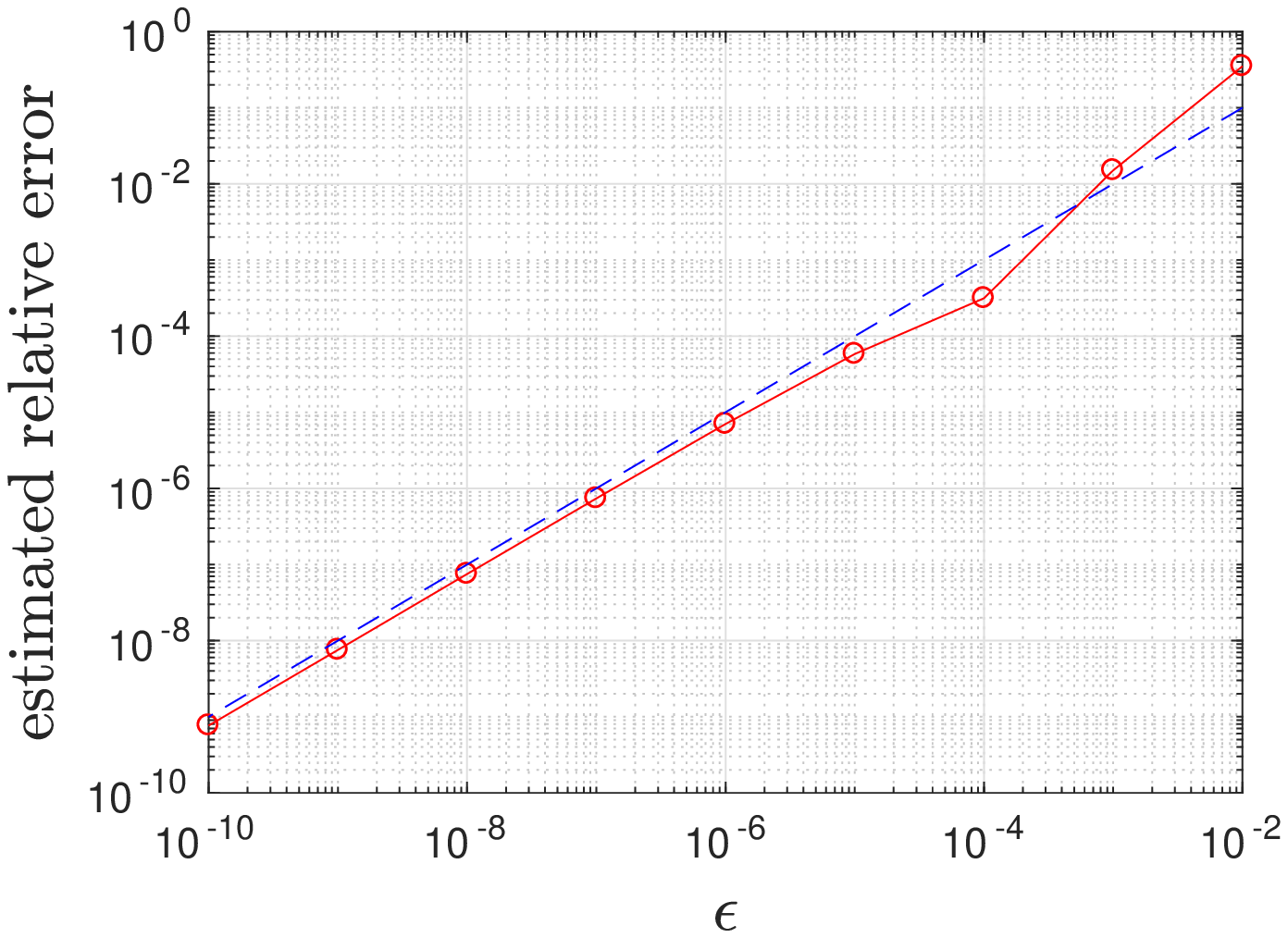}
\caption{Asymptotic errors in the single layer potential \cref{slpasym2}. 
  Red circles indicate numerical results and the dashed blue line is the function $y=10x$.
  Results for the moving boundary \cref{mb1} are plotted on the left and 
  results for the moving boundary \cref{mb2} are plotted on the right.}
\label{fig2}
\end{figure}

\begin{exmp} \label{exmp2}
\textsf{Validation of the asymptotic expansion \cref{dlpasym} of the unsteady Stokes 
double layer potential.}
\end{exmp}

In our second experiment,  we carry out the same analysis for the
double layer potential, with the same strategy 
for validation. The results are shown in \cref{fig3}, clearly showing the linear
decrease of the error with $\epsilon$.
\begin{figure}[ht]
\centering
\includegraphics[height=40mm]{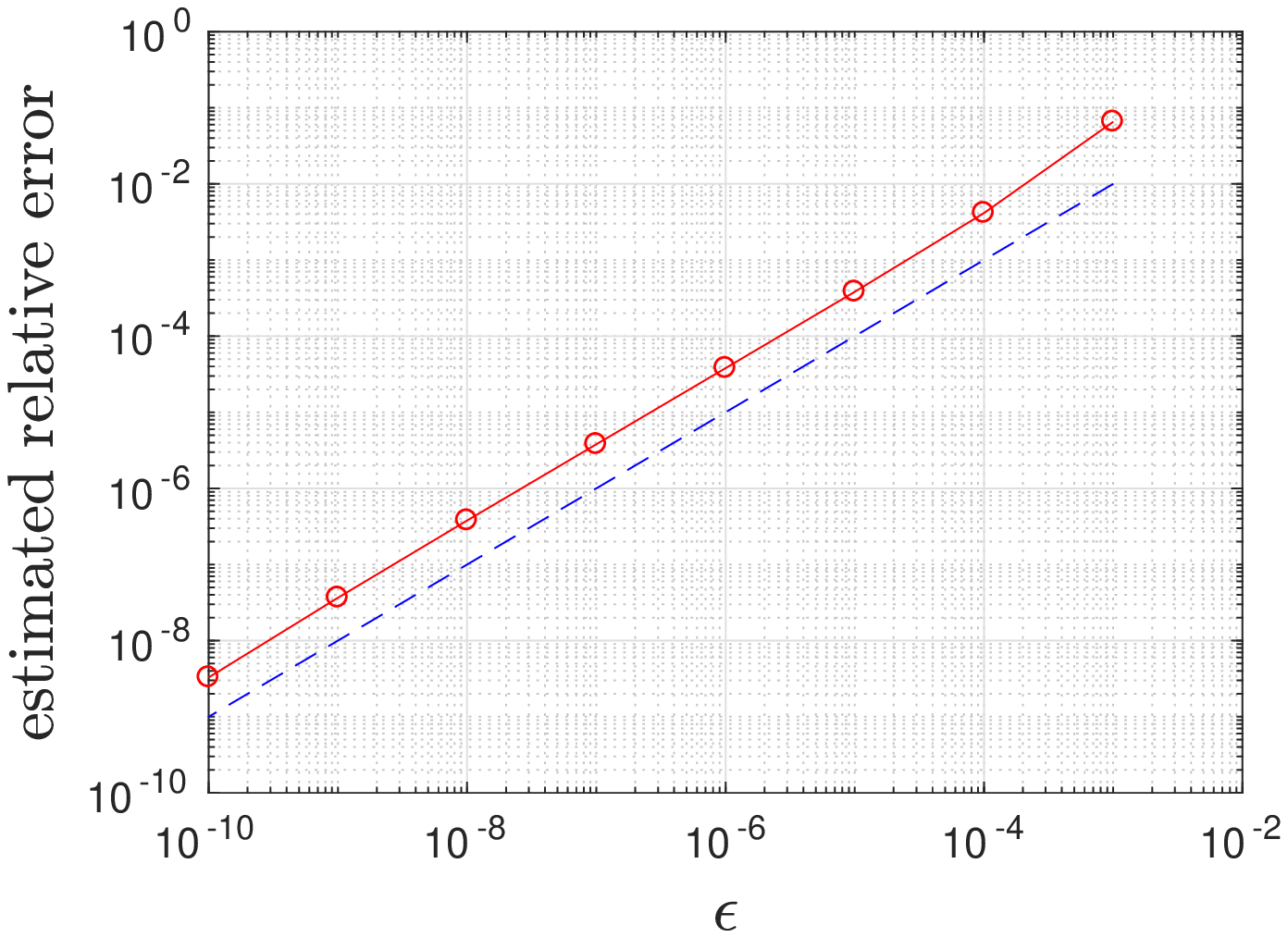}
\hspace{5mm}
\includegraphics[height=40mm]{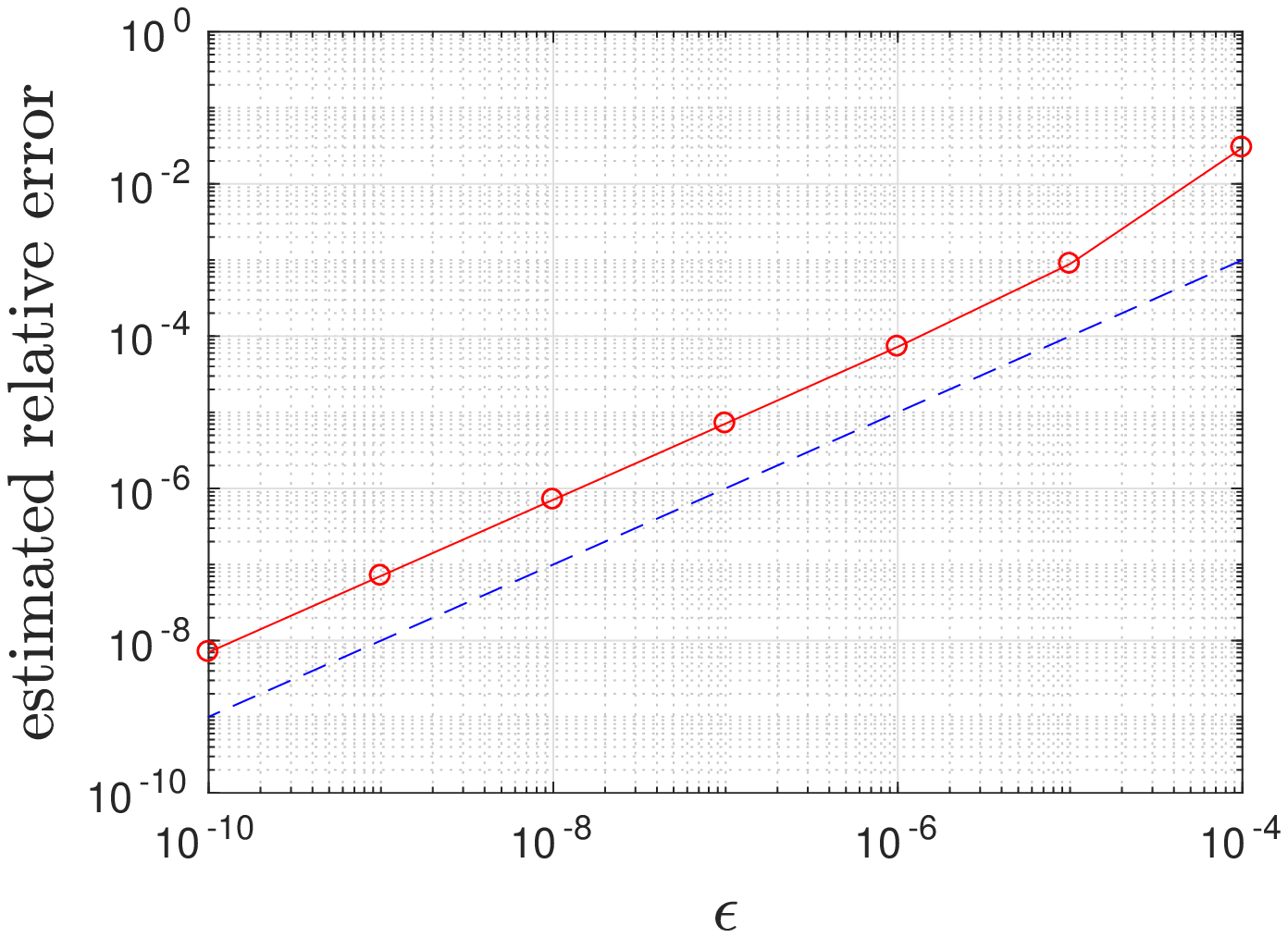}
\caption{Asymptotic errors in the double layer potential \cref{dlpasym}.
  Red circles indicate numerical results and the dashed blue line is the function $y=10x$.
  Results for the moving boundary \cref{mb1} are plotted on the left and 
  results for the moving boundary \cref{mb2} are plotted on the right.}
\label{fig3}
\end{figure}

\begin{exmp} \label{exmp3}
  \textsf{Unsteady Stokes flow for a bounded, moving
    domain with velocity boundary conditions.}
\end{exmp}

In our last example, 
we demonstrate the overall convergence of the full scheme for 
unsteady Stokes flow in a moving geometry, solving the 
integral equation \cref{biedir} (in the absence of a forcing term).
We use a scheme that is fourth order accurate in time and $16$th order in space,
with an exact solution of the form
\be
\ba
{\bf u}(\bx,t) &= \sum_{j=1}^{5}
\frac{1} {t^2}e^{-|\bx-\by_j|^2/(4t)}\left(x_2-y_{j2},-(x_1-y_{j1})\right)\\
&\quad +e^{-1/t}\sin(20t)e^{x_1}\left(\cos(x_2),-\sin(x_2)\right),
\ea
\ee
where ${\bf x} = (x_1,x_2)$ and the $\{ \by_j \}$ are chosen to be 
equispaced on the circle of radius $2$ centered at the origin,
which encloses both domains of interest. The number of spatial discretization
points is $200$ and the spatial discretization error is negligible. We place
$100$ test points inside the computational domain. \Cref{tab1} lists
the numerical results for both moving boundaries. Here $N$ is
the total number of time steps, $\dt$ is the step size, $E$ is the relative
$l^2$ error at the final time $T=1$, and $r$ is the ratio of relative $l^2$
errors for successive time step refinements. That is, $r(j) = E(j- 1)/E(j)$, which
gives a rough estimate of the convergence order. Note that $r \approx 2^4$,
consistent with fourth order convergence
once $\dt$ is sufficiently small.
      
\begin{table}[ht]
  \begin{center}
    {\bf \caption{Numerical results for solving the problem
        of unsteady Stokes flow with velocity boundary conditions
        in the moving geometries shown in \cref{fig1}.
\label{tab1}
}}
\begin{tabular*}{1.0\textwidth}{@{\extracolsep{\fill}} |c| c| c| c| c| c| c|  }
  \hline
  $N$ & $20$ & $40$ & $80$ & $160$ & $320$ & $640$\\
  \hline
  $\dt$ & $1/20$ & $1/40$ & $1/80$ & $1/160$ & $1/320$ & $1/640$ \\
  \hline\hline
  \multicolumn{7}{|c|}{Moving boundary \cref{mb1}}\\
  \hline
  E & $5.6\cdot 10^{-5}$ & $5.3\cdot 10^{-7}$ & $1.7\cdot 10^{-7}$
  & $9.9\cdot 10^{-9}$ & $5.8\cdot 10^{-10}$ & $3.8\cdot 10^{-11}$ \\
  \hline
  r & & $107$&  $3.0$&  $17.5$& $17.2$ & $15.4$\\
  \hline\hline
  \multicolumn{7}{|c|}{Moving boundary \cref{mb2}}\\
  \hline
  E & $4.7\cdot 10^{-5}$ & $3.6\cdot 10^{-7}$ & $1.6\cdot 10^{-7}$
  & $9.8\cdot 10^{-9}$ & $6.0\cdot 10^{-10}$ & $3.8\cdot 10^{-11}$ \\
  \hline
  r & &  $130$&  $2.3$&  $16$& $16$ & $15.8$\\
  \hline
\end{tabular*}
\end{center}
\end{table}

\section{Conclusions}\label{sec:conclusion}

We have developed a new method for the accurate evaluation of 
unsteady Stokes layer potentials in moving geometries.
The scheme is based on splitting the local parts of the layer potentials
into asymptotic and nearly-singular components.
The leading order asymptotic contributions are derived analytically and the 
nearly-singular parts are handled accurately via a single Gauss-Legendre 
quadrature panel using an exponential change of variables in time. 
Numerical experiments demonstrate that the scheme converges at the expected 
rate for flows in bounded domains with velocity boundary conditions.
One limitation of the current scheme is that the history part is handled
using a spectral approximation of the Green's function 
\cite{greengard2000acha,greengard1990cpam,lin1993thesis}. 
We are currently working on a marching scheme that represents the history 
part on an adaptive spatial mesh using the ``bootstrapping" method of 
\cite{wang2017thesis}.

It is worth noting that the recently developed
mixed potential method for unsteady Stokes flow \cite{mixedpot}
also permits high order accurate marching schemes in moving geometries.
An advantage of that method is that it requires only harmonic and layer 
potentials, simplifying the fast algorithm and quadrature issues.
A disadvantage is that it requires computation of the Helmholtz decomposition of the 
volume forcing term.
Unsteady Stokes potentials lead to better-conditioned integral equations when using
fully implicit marching schemes (at least for large time steps) and require 
only integration of the volume forcing term against the Green's function.
We intend to explore the relative performance of these two approaches 
in future work.

\bibliographystyle{abbrv}
\bibliography{journalnames,refs}

\end{document}